\newcommand{\Z}{{\mathbb Z}}
\newcommand{\R}{{\mathbb R}}
\renewcommand{\P}{{\mathbb P}}
\newcommand{\E}{{\mathbb E}}
\newcommand\ee{e}
\newcommand\oo{\infty}
\newcommand\De{\Delta}
\newcommand\rad{\text{\rm rad}}
\newcommand\resp{respectively}
\newcommand\Om{\Omega}
\newcommand\qq{\qquad}
\newcommand\pc{p_{\text{\rm c}}}
\newcommand\vpc{\vec p_{\text{\rm c}}}
\newcommand\lip{F}
\newcommand\brwx{BRW($\xi$)}
\newcommand\sbrwx{$C+\text{\brwx}$}
\newcommand\df\textbf
\newcommand{\fr}{\rightarrowtail}
\newcommand{\di}{\Lambda}
\newtheorem{thm}{Theorem}
\newtheorem{lemma}[thm]{Lemma}
\newcounter{mycount}
\newenvironment{romlist}{\begin{list}{\rm(\roman{mycount})}%
   {\usecounter{mycount}\labelwidth=1cm\itemsep 0pt}}{\end{list}}
   \newenvironment{remark}[1][Remark.]{\begin{trivlist}
\item[\hskip \labelsep {\bfseries #1}]}{\end{trivlist}}
\begin{document}
\title{Lipschitz percolation}

\author[Dirr]{N.\ Dirr}
\address[N.\ Dirr]{Department of Mathematical Sciences, University of Bath, Bath BA2 7AY, UK}
\email{n.dirr@maths.bath.ac.uk}
\urladdr{http://www.maths.bath.ac.uk/$\sim$nd235/}

\author[Dondl]{P.\ W.\ Dondl}
\address[P.\ W.\ Dondl]{Hausdorff Center for Mathematics and Institute for
Applied Mathematics, Endenicher Allee 60, D-53115 Bonn, Germany}
\email{pwd@hcm.uni-bonn.de} \urladdr{http://www.dondl.net/}

\author[Grimmett]{G.\ R.\ Grimmett}
\address[G.\ R.\ Grimmett]{Statistical Laboratory, Centre for Mathematical
Sciences, Cambridge University, Wilberforce Road, Cambridge CB3
0WB, UK} \email{g.r.grimmett@statslab.cam.ac.uk}
\urladdr{http://www.statslab.cam.ac.uk/$\sim$grg/}

\author[Holroyd]{\\ A.\ E.\ Holroyd}
\address[A.\ E.\ Holroyd]{\sloppypar Microsoft Research, 1 Microsoft Way, Redmond WA 98052, USA;
and Department of Mathematics, University of British Columbia,
121--1984 Mathematics Road, Vancouver, BC V6T 1Z2, Canada}
\email{holroyd@math.ubc.ca}
\urladdr{http://math.ubc.ca/$\sim$holroyd/}

\author[Scheutzow]{M.\ Scheutzow}
\address[M.\ Scheutzow]{Fakult\"at II, Institut f\"ur Mathematik, Sekr.\ MA 7--5,
Technische Universit\"at Berlin, Strasse des 17.\ Juni 136,
D-10623 Berlin, Germany} \email{ms@math.tu-berlin.de}
\urladdr{http://www.math.tu-berlin.de/$\sim$scheutzow/}

\date{17 November 2009}

\keywords{Percolation, Lipschitz embedding, random surface}
\subjclass[2000]{60K35, 82B20}

\begin{abstract}
We prove the existence of a (random) Lipschitz function
$\lip:\Z^{d-1}\to\Z^+$ such that, for every $x\in\Z^{d-1}$, the
site $(x,\lip(x))$ is open in a site percolation process on
$\Z^{d}$. The Lipschitz constant may be taken to be $1$ when
the parameter $p$ of the percolation model is sufficiently
close to $1$.
\end{abstract}

\maketitle

\section{Introduction}

Let $d \ge 1$ and $p\in (0,1)$. The site percolation model on the
hypercubic lattice $\Z^d$ is obtained by designating each site
$x\in\Z^d$ \df{open} with probability $p$, and otherwise
\df{closed}, with different sites receiving independent states.
The corresponding probability measure on the sample space $\Om =
\{0,1\}^{\Z^d}$ is denoted by $\P_p$, and the expectation by
$\E_p$. We write $\Z^+=\{1,2,\dots\}$, and $\|\cdot\|$ for the
$1$-norm on $\Z^d$.

\begin{thm}\label{main}
For any $d\geq 2$, if $p> 1-(2d)^{-2}$ then there exists
a.s.\ a (random) function $\lip:\Z^{d-1}\to \Z^+$ with the
following properties.
\begin{romlist}
\item For each $x\in \Z^{d-1}$, the site $(x,\lip(x))\in\Z^{d}$
    is open.
\item For any $x,y\in \Z^{d-1}$ with $\|x-y\|=1$ we have
    $|\lip(x)-\lip(y)|\leq 1$.
\item For any isometry $\theta$ of $\Z^{d-1}$ the functions
    $\lip$ and $\lip\circ \theta$ have the same laws, and the
    random field $(\lip(x): x\in\Z^{d-1})$ is ergodic under each translation of
    $\Z^{d-1}$.
\item There exists $A=A(p,d) < \infty$ such that
$$
\P_p(\lip(0)>k)\leq A \nu^k,\qq k\ge 0.
$$
where $\nu=2d(1-p)<1$.

\end{romlist}
\end{thm}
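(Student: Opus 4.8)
The plan is to realise $\lip$ as the pointwise-minimal \emph{admissible} function, where we call $f:\Z^{d-1}\to\Z^+$ admissible if $(x,f(x))$ is open for every $x\in\Z^{d-1}$ and $|f(x)-f(y)|\le1$ whenever $\|x-y\|=1$. The first observation is that admissibility is stable under pointwise minima: if $f,g$ are admissible then $x\mapsto\min\{f(x),g(x)\}$ is $1$-Lipschitz and, at each $x$, equals one of $f(x),g(x)$, hence sits on an open site. Since admissible functions take values in $\Z^+$, it follows that whenever the set of admissible functions is non-empty its pointwise infimum $\lip$ is attained at every site and is itself admissible: fixing $x$ and choosing admissible $f$ with $f(x)=\lip(x)$ gives that $(x,\lip(x))$ is open, and $\lip(y)\le f(y)\le f(x)+1=\lip(x)+1$ for each neighbour $y$. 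This yields (i) and (ii). Concretely, $\lip$ is the increasing limit of the approximations $f_0(x)=\min\{j\ge1:(x,j)\text{ open}\}$ and $f_{n+1}(x)=\min\{j\ge D_n(x):(x,j)\text{ open}\}$, where $D_n(x)=\max\bigl(f_0(x),\max_{y:\|y-x\|=1}f_n(y)-1\bigr)$; in particular $\lip$ is a measurable function of the configuration and $\{\lip(0)>k\}=\bigcup_n\{f_n(0)>k\}$.

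For (iii), the construction of $\lip$ from the i.i.d.\ field commutes with every isometry of $\Z^{d-1}$ acting on the first $d-1$ coordinates of $\Z^d$, and the law of the field is invariant under these maps, so $\lip$ and $\lip\circ\theta$ have the same law. For ergodicity under a translation $\tau$ of $\Z^{d-1}$, note that $(\lip(x):x\in\Z^{d-1})$ is a $\tau$-equivariant measurable factor of the i.i.d.\ field, which is mixing — hence ergodic — under the corresponding translation of $\Z^d$; ergodicity passes to factors. Finally, (iv) will give $\P_p(\lip(0)=\oo)=0$, i.e.\ the admissible set is a.s.\ non-empty, so $\lip$ exists a.s. Thus everything reduces to the tail bound.

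For (iv) it suffices, by the display above, to bound $\P_p(f_n(0)>k)$ uniformly in $n$. The event $\{f_n(0)>k\}$ unrolls: either the demanded level $D_{n-1}(0)$ exceeds $k$ — because $f_0(0)>k$ (a vertical run of $k$ closed sites below $(0,k+1)$), or because some neighbour $y$ carries $f_{n-1}(y)>k+1$, in which case one recurses at $y$ with parameter $k+1$ and the iteration index reduced by one — or else $D_{n-1}(0)\le k<f_n(0)$, which forces the sites $(0,D_{n-1}(0)),\dots,(0,k)$ all to be closed. Iterating this dichotomy produces a finite \emph{demand cascade}: a tree embedded in $\Z^d$, rooted above the origin near height $k$, in which every vertex either contributes closed sites in its own column or forwards its demand one level up to one of its $\le2d$ neighbours, the whole structure certifying a collection of closed sites of total size comparable to $k$ together with the $\le2d$-fold routing choices made along the way. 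One then estimates $\P_p(f_n(0)>k)$ by a first-moment sum over all such cascades: a cascade has probability at most $(1-p)^{c}$, where $c$ is its number of \emph{distinct} closed sites, and cascades of a given size are enumerated at exponential rate. The resulting series is geometric with ratio a constant multiple of $(2d)^2(1-p)$ — so it converges exactly when $p>1-(2d)^{-2}$ — while its leading term is of order $\nu^k$ with $\nu=2d(1-p)$; letting $n\to\oo$ then gives $\P_p(\lip(0)>k)\le A\nu^k$.

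The step I expect to be the main obstacle is this last combinatorial estimate. One must set up the cascade so that it provably certifies on the order of $k$ \emph{distinct} closed sites — rather than merely re-deriving the weaker statement $\lip(0)>k-O(1)$, which a careless unrolling would give, since the natural recursion tends to propagate the demand \emph{upward} in level and must be forced back down by genuinely closed sites — while simultaneously keeping the number of cascades of size $\ell$ under control by $(2d)^{O(\ell)}$. Arranging both requirements at once, and in particular checking that the closed sites along the cascade are not double-counted, is exactly what consumes the hypothesis $p>1-(2d)^{-2}$ rather than merely $p>1-(2d)^{-1}$.
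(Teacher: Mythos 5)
Your reduction of the theorem to a tail bound is fine: the pointwise-minimal admissible function exists and satisfies (i)--(ii) by the lattice argument you give, your iterative scheme $f_n\uparrow\lip$ is a legitimate measurable construction, and (iii) follows as you say from equivariance and the factor-of-i.i.d.\ ergodicity argument (the paper itself disposes of (i)--(iii) just as briefly, working instead with the surface sitting immediately above the set of sites reachable by certain dual paths from the lower half-space). But part (iv) is the entire mathematical content of the theorem, and your proposal does not prove it. The ``demand cascade'' is only described in outline, and you yourself identify the unresolved point: you must show that the cascade certifies order $k$ \emph{distinct} closed sites while the number of cascades of size $\ell$ grows only like $(2d)^{O(\ell)}$, and you have not exhibited a cascade structure with either property, let alone both. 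As it stands, the unrolling of $\{f_n(0)>k\}$ branches over all $2(d-1)$ neighbours at every stage and the closed columns produced at different branches can overlap, so neither the probability bound $(1-p)^{c}$ with $c\asymp k$ nor the entropy bound is justified. This is a genuine gap, not a routine verification.

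For comparison, the paper's Lemma \ref{sum2} resolves exactly this difficulty by replacing the branching cascade with a \emph{single} path certificate. If $\lip(0)>k$ (in the paper's construction, if $(0,k)$ lies in the reachable set $G$), there is a $\di$-path of \emph{distinct} sites from the half-space $\Z^{d-1}\times\{\dots,-1,0\}$ to $(0,k)$ whose steps lie in $\{\pm e_d\}\cup\{-e_d\pm e_j\}$ and whose every $+e_d$ step lands on a closed site. Distinctness of the path's sites makes the closed-site requirements fall on distinct, hence independent, sites, so a path with $U$ up-steps is admissible with probability exactly $q^U$ -- no double-counting issue arises -- while the number of paths with $U$ up-steps and $D$ down-steps is at most $(2d)^{U+D}$. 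A path ending at height $k$ has $U-D= k$ (up to the choice of starting height), and summing $(2d)^{U+D}q^U$ over $U-D\ge k$ gives a convergent geometric series precisely when $(2d)^2q<1$, with the stated decay $A(2dq)^k$; the same path-extension trick (one diagonal step $-e_d\pm e_j$) also yields the Lipschitz property (ii) directly. If you want to complete your route, the cleanest fix is to show that your recursion's demand propagation can always be traced along one such self-avoiding $\di$-path, i.e.\ to prove the paper's lemma; without that (or an equivalent estimate) the proof is incomplete.
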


We may think of $((x,\lip(x)): x\in\Z^{d-1})$ as a random
surface, or a Lipschitz embedding of $\Z^{d-1}$ in $\Z^d$. When
$d=2$, the existence of such an embedding for large $p$ is a
consequence of the fact that two-dimensional directed
percolation has a non-trivial critical point. The result is
less straightforward when $d \ge 3$.

The event that there exists an $F$ satisfying (i) and (ii) is
clearly increasing, and invariant under translations of
$\Z^{d-1}$, therefore there exists $p_{\mathrm{L}}$ such that the
event occurs with probability $1$ if $p>p_{\mathrm{L}}$ and $0$ if
$p<p_{\mathrm{L}}$.  Theorem \ref{main} implies that
$p_{\mathrm{L}}\le 1-(2d)^{-2}$. This upper bound may be improved
to $1-(2d-1)^{-2}$ as indicated at the end of Section
\ref{sec:mainest}, but we do not attempt to optimize it here. (A
similar remark applies to the forthcoming Theorem \ref{thm:hill}.)
The inequality $p_{\mathrm{L}}>0$ also holds, because site
percolation on $\Z^d$ with next-nearest neighbour edges has a
non-trivial critical point.

Some history of the current paper, and some implications of the
work, are summarized in Section \ref{bkg}.  In Section
\ref{sec:mainthms} we present a variant of Theorem \ref{main}
involving finite surfaces. The principal combinatorial estimate
appears in Section \ref{sec:mainest}, and the proofs of the
theorems may be found in Section \ref{sec:pf}. Further properties
of Lipschitz embeddings will be presented in \cite{GH09}.

\section{Background and applications}\label{bkg}

The percolation model is one of the most studied models for a
disordered medium, and the reader is referred to \cite{G99} for a
recent account of the theory.  The basic question is to determine
for which values of $p$ there exists an infinite self-avoiding
walk of open sites. There exists a critical value $\pc$, depending
on the choice of underlying lattice, such that such a walk exists
a.s.\ when $p>\pc$, and not when $p<\pc$. It is clear that
$\pc(\Z)=1$, and it is fundamental that $\pc(\Z^d)<1$ when $d \ge
2$. Similarly, there exists a critical probability $\vpc$ for the
existence of an infinite open self-avoiding walk that is
non-decreasing in each coordinate, and $\vpc(\Z^d) < 1$ for $d \ge
2$. The existence of certain types of open surface has also been
studied, see for example \cite{ACCFR,GG02,GH00,H00}.

The purpose of this note is to prove the existence of a
non-trivial critical point for the existence of a type of open
Lipschitz surface within site percolation on $\Z^d$ with $d \ge
2$. The existence of such surfaces is interesting in its own
right, and in addition there are several applications to be
developed elsewhere. We make a remark about the history of the
current note. Theorem \ref{main} was first proved by a subset
of the current authors, using an argument based on a
subcritical branching random walk, summarized in Section
\ref{sec:sbrw}. The simpler proof presented in Sections
\ref{sec:mainest} and \ref{sec:pf} was found subsequently by
the remaining authors.

Several applications and extensions of Theorem \ref{main} will
appear in \cite{D,DDS,GH09}. These include a study
of the movement of an interface through a field of obstacles,
and the existence of embeddings in $\Z^{d}$ of infinite words
indexed by $\Z^{d-1}$ (a problem posed by Ron Peled and
described in \cite{G-demon}).

\section{Local covers}\label{sec:mainthms}

We next state a variant of Theorem \ref{main} that is in a sense
stronger.  Let $d \ge 2$ and consider site percolation with
parameter $p$ on $\Z^d$.  Write $\Z_0^+=\{0,1,\ldots\}$. Let $x
\in \Z^{d-1}$. A \df{local cover} of $x$ is a function
$L:\Z^{d-1}\to \Z_0^+$ such that:
\begin{romlist}
\item for all $y\in\Z^{d-1}$, if $L(y)>0$ then
    $(y,L(y))$ is open;
\item for any $y,z\in \Z^{d-1}$ with $\|y-z\|=1$ we have
    $|L(y)-L(z)|\leq 1$;
\item $L(x)>0$.
\end{romlist}

If $x$ has a local cover, then the minimum of all local covers
of $x$ is itself a local cover of $x$; we call this the
\df{minimal} local cover of $x$ and denote it $L_x$. Define its
\df{radius}
$$
\rho_x:=\sup\Big\{\big\|(x,0)-(y,L_x(y))\big\|: y\in\Z^{d-1}
\text{ such that } L_x(y)>0\Big\},
$$
and take $\rho_x=\infty$ if $x$ has no local cover.

\begin{thm}\label{thm:hill}
For any $d\geq 2$ and $p\in(0,1)$ such that $q:=1-p < (2d)^{-2}$,
there exists
$A=A(p,d)<\oo$ such that
$$
\P_p(\rho_0\ge n) \le A[(2d)^2q]^n, \qq n\ge 0.
$$
\end{thm}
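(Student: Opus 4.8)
The plan is a Peierls‑type (contour) argument. I will show that $\{\rho_0\ge n\}$ forces the configuration to contain, within a bounded distance of the column above the origin, a large connected set of closed sites, and then estimate the probability of such a set by a union bound. The bound $[(2d)^2q]^n$ is exactly what this produces, because the number of connected $m$‑vertex subsets of $\Z^d$ containing a fixed vertex is at most $(2d)^{2m}$: traverse a spanning tree by an Euler tour, making at most $2d$ choices at each of the $\le 2m$ steps. Paired with the probability $q^m$ that a given set of $m$ sites is all closed, and the condition $(2d)^2q<1$, this gives a convergent geometric sum of the right order.

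First I would record the structure of the minimal local cover $L=L_0$ (when it exists). Minimality is a local property: at any $y$ with $L(y)=h>0$ one cannot lower $L(y)$ by one and keep a local cover, and listing the obstructions shows that either (a) $y=0$ and $h=1$, or (b) $h\ge2$ and the site $(y,h-1)$ is closed, or (c) $y$ has a neighbour $y'$ with $L(y')=h+1$. Two consequences: the support $\Sigma=\{y:L(y)>0\}$ is connected and contains $0$ (a component of $\Sigma$ missing $0$ could be set to $0$ without harming the Lipschitz or openness conditions, contradicting minimality); and, iterating (c) — which strictly raises the height, hence terminates when $L$ is bounded — one reaches from any $y\in\Sigma$ a site sitting directly above a closed site. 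A little more work with the same ideas should show that a large radius forces many closed sites: if $\rho_0\ge n$ then, according as the supremum in the definition of $\rho_0$ is attained far from $0$ or high above it (and using $L(0)\ge L(y)-\|y\|$ together with $1$‑Lipschitzness), $\Sigma$ contains a self‑avoiding path of length $\Omega(n)$ starting at $0$, and the closed sites propping up the surface over this path — together with the sites on the chains from (c) — furnish a connected set of at least $n-O(1)$ closed sites lying within bounded distance of $(0,0)$.

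Granting that, the estimate is immediate: with $K=K(d)$ the number of admissible anchor sites, the number of connected $m$‑element sets of closed sites within bounded distance of $(0,0)$ is at most $K(2d)^{2m}$, each occurring with probability $q^m$; summing over $m\ge n-O(1)$ and using $(2d)^2q<1$ yields $\P_p(\rho_0\ge n)\le A[(2d)^2q]^n$ for a suitable $A=A(p,d)<\oo$. The case $\rho_0=\oo$ (no local cover, or an unbounded one) is covered by the same extraction, which then produces connected sets of closed sites of every size.

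The crux — and the only genuinely delicate point — is the claim at the end of the second paragraph: converting "$\rho_0$ large" into "$\Omega(n)$ closed sites, connected and near the origin". One must rule out a tall, thin minimal surface propped up by only a handful of closed sites, i.e.\ a surface rising along long "forcing chains" of type (c) that terminate at very few closed sites; excluding this is essentially equivalent to the theorem, and is where the real work lies. A convenient device is to run an explicit greedy exploration that builds $L_0$ outward from the origin column, pushing the tentative surface up by one whenever it meets a closed site: each push advances the radius of the surface by $O(1)$, so $\rho_0\ge n$ entails at least $n-O(1)$ pushes; the closed sites encountered are distinct, and each lies within $\ell^1$‑distance $2$ of its predecessor, so they form a path in the distance‑$\le2$ graph on $\Z^d$, whose degree is $2d(d+1)\le(2d)^2$, from which one reads off the required set. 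Carrying out this exploration carefully — in particular verifying the $O(1)$ advance and the distance‑$2$ bound that pin the counting base down to exactly $(2d)^2$ — is the main obstacle.
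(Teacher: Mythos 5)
Your counting inputs are fine (connected $m$-sets through a fixed site number at most $(2d)^{2m}$, each all-closed with probability $q^m$), but the reduction they are supposed to rest on --- that $\rho_0\ge n$ produces a \emph{connected} (in the distance-$\le 2$ graph) set of at least $n-O(1)$ closed sites anchored near the origin --- is not proved in your write-up (you yourself flag it as ``the main obstacle''), and in the precise form you state it, it is false. Take $d=2$ and let the closed sites be exactly the tower $(0,1),\dots,(0,h)$ together with the horizontal run $(h,1),(h+1,1),\dots,(h+m,1)$. The minimal local cover of $0$ is the tent of slope $1$ with apex $(0,h+1)$, prolonged at height $2$ over the run, so $\rho_0\approx h+m$; but the supporting closed sites split into two clusters at mutual distance about $h$, which is unbounded. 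The same example defeats the distance-$2$ claim for your greedy exploration: after the top of the tower, the next closed site met is at distance about $2h$. What is true is only that the \emph{total} length of the exploration/dual path is at most twice the number of closed sites it uses (because the endpoint has non-negative height), so the gaps between consecutive closed sites are controlled in aggregate, not individually. That forces the entropy count to be over such paths rather than over connected sets of closed sites --- and once you count paths you have essentially abandoned the Peierls framing and rederived the paper's argument.

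For comparison, the paper avoids the surface-structure analysis entirely: it introduces admissible $\di$-paths (downward or diagonal-downward steps are free, each upward step must land on a closed site), proves in Lemma \ref{sum2} that the expected number of such paths from $0$ ending at height $\ge h$ and horizontal distance $\ge r$ is at most $C(aq)^h(a^2q)^r$ with $a=2d$, and then observes that the set $H_0$ of endpoints of admissible paths from $(0,0)$ staying at height $\ge 0$ \emph{is} (one layer below) the minimal local cover, so $\rho_0$ is controlled by $\rad(H_0)$. The factor $(2d)^2$ there comes from ``at most $2d$ choices per step, and at most two steps per closed site'', which is the correct replacement for your spanning-tree count. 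To salvage your approach you would need either to prove a connectivity statement at a scale that grows with the cluster (which destroys the base $(2d)^2$) or to pass to the path count --- so as it stands the proposal has a genuine gap at its central step.
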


\section{Principal estimate}\label{sec:mainest}

The key step is to identify an appropriate set of dual paths that
are blocked by a Lipschitz surface of the type sought in Theorem
\ref{main}.  Such paths will be allowed to move downwards (that
is, in the direction of decreasing $d$-coordinate), with or
without a simultaneous horizontal move, but whenever they move
upwards, they must do so to a closed site.

Let $e_1,\ldots,e_d\in\Z^d$ be the standard basis vectors of
$\Z^d$. We define a \df{$\di$-path} from $u$ to $v$ to be any
finite sequence of distinct sites $u=x_0,x_1,\ldots,x_k=v$ of
$\Z^d$ such that for each $i=1,2,\ldots,k$:
\begin{equation}\label{lambdap}
x_i-x_{i-1}\in\{\pm e_d\}\cup \{- e_d\pm
    e_j:j=1,\ldots,d-1\}.
\end{equation}
A $\di$-path is called \df{admissible} if in addition for each
$i=1,2,\ldots,k$:
$$
\text{if }x_i-x_{i-1} = e_d\text{ then $x_i$ is closed.}
$$

Denote by $u\fr v$ the event that there exists an admissible
$\di$-path from $u$ to $v$, and write
$$
\tau_p(u)=\P_p(0\fr u).
$$
The next lemma is the basic estimate used in the proofs. For
$u= (u_1,u_2,\dots,u_d) \in \Z^d$, we write $h(u)=u_d$ for its
\df{height}, and
$$
r(u)= \|(u_1,u_2,\dots,u_{d-1})\| = \sum_{i=1}^{d-1} |u_i|.
$$
For $x\in\R$, $x^+ = \max\{0,x\}$ (\resp, $x^- = -\min\{0,x\}$)
denotes the positive (\resp, negative) part of $x$.

\begin{lemma}\label{sum2}
Let $d \ge 2$ and $a=2d$, and take $p\in(0,1)$ such that $q:=
1-p  \in(0,a^{-2})$. For $h\in\Z$ and $r\in \Z^+_0$ satisfying $r
\ge h^-$,
$$
\sum_{\substack{u\in\Z^d:\\ h(u)\ge h,\, r(u)\ge r}} \tau_p(u) \le
\frac 1{(1-aq)(1-a^2q)} (aq)^{h}(a^2q)^{r}.
$$
\end{lemma}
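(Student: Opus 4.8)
The natural approach is to set up a recursion for $\tau_p(u)$ based on the last step of an admissible $\di$-path to $u$, and then sum. By \eqref{lambdap}, the penultimate site $x_{k-1}$ of an admissible $\di$-path to $u$ satisfies $u - x_{k-1} \in \{\pm e_d\} \cup \{-e_d \pm e_j : j = 1,\dots,d-1\}$; equivalently $x_{k-1} \in u + \{-e_d\} \cup \{e_d\} \cup \{e_d \pm e_j\}$. The case $u - x_{k-1} = e_d$ (the path steps \emph{up} into $u$) is constrained by admissibility: $u$ must then be closed, an event of probability $q$ independent of the rest of the path (which lives strictly below $u$). The other $2d-1$ choices of last step impose no constraint on the state of $u$. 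This suggests the first-moment bound
$$
\tau_p(u) \le q\,\tau_p(u - e_d) + \sum_{v} \tau_p(v),
$$
where the sum is over the $2d-1$ sites $v \in \{u+e_d\} \cup \{u + e_d \pm e_j : j=1,\dots,d-1\}$, together with a base case $\tau_p(0) = 1$ (and one must be slightly careful that paths have distinct sites, but for an upper bound one may relax this). Actually it is cleaner to encode this as a bound on the generating-function-style sum directly, or to first prove a pointwise estimate $\tau_p(u) \le C (aq)^{h(u)^+}(a^2 q)^{\text{something}}$ — but I expect the paper instead introduces a weight and sums the recursion.

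Concretely, the plan is: multiply the recursion by a test weight $w(u) = s^{h(u)} t^{r(u)}$ for suitable $s,t \in (0,1)$ and sum over all $u$, or directly bound $S := \sum_{h(u)\ge h,\ r(u)\ge r}\tau_p(u)$. Define $f(h,r) = \sum\{\tau_p(u) : h(u) = h,\ r(u) = r\}$. Summing the recursion over the level set $\{h(u)=h, r(u)=r\}$ and tracking how each of the $2d$ move-types changes $(h(u), r(u))$: the up-move $-e_d$ contributes $q f(h-1, r)$; the pure down-move $+e_d$ contributes (part of) $f(h+1, r)$; each of the $2(d-1)$ diagonal down-moves $e_d \mp e_j$ changes $r$ by $\pm 1$, contributing pieces of $f(h+1, r\pm 1)$. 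Because $a = 2d$ counts exactly these $2d$ directions, and the factor $q$ appears only on the single up-move, one gets a linear inequality that the ansatz $f(h,r) \le c\,(aq)^{h}(a^2q)^{r}$ (on the relevant range $r \ge h^-$) satisfies provided $aq < 1$ and $a^2 q < 1$; the constant $c$ and the prefactor $\frac{1}{(1-aq)(1-a^2q)}$ in the statement then come out of summing the geometric tails $\sum_{h'\ge h}\sum_{r'\ge r} f(h',r')$, the two factors $(1-aq)^{-1}$ and $(1-a^2 q)^{-1}$ being the two geometric series in the $h$- and $r$-directions respectively.

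The main obstacle is bookkeeping the asymmetry between the height and radius variables, and in particular justifying the restriction $r \ge h^-$ in the hypothesis. The point is that an admissible $\di$-path reaching a site of height $h(u) = h$ must, if $h < 0$, have descended at least $|h|$ net steps, and every descending step either keeps $r$ fixed or changes it by $1$, while the path starts at $r = 0$; a more careful combinatorial accounting (e.g. comparing the number of $e_j$-moves needed) shows that the generating-function estimate degrades exactly on the region $r < h^-$, so one must either restrict to $r \ge h^-$ (as stated) or handle that corner separately. A secondary subtlety is the distinctness-of-sites requirement in the definition of $\di$-path: for the first-moment upper bound this is harmless since dropping it only adds paths, but one should remark on it. I would also need to check the base/boundary cases of the induction (small $h$, $r = 0$) directly. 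Once the pointwise/level-set estimate is in hand, summing the two geometric series to obtain the displayed bound is routine.
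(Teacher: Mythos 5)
Your strategy (a last-step recursion for $\tau_p$ plus an exponential ansatz in $(h,r)$) is genuinely different from the paper's, but as described it contains a gap that prevents it from proving the lemma on the stated range of $q$. The crux is your claim that the level-set recursion is satisfied by the ansatz $(aq)^h(a^2q)^r$ ``provided $aq<1$ and $a^2q<1$''. Classifying the last step as up, straight down, outward diagonal or inward diagonal, your recursion for the level sums $f(h,r)$ reads $f(h,r)\le q\,f(h-1,r)+f(h+1,r)+2(d-1)\,f(h+1,r-1)+(d-1)\,f(h+1,r+1)$ (plus the base term at the origin); note the inward diagonal moves, which decrease $r$ along the path and hence pull in the level $(h+1,r+1)$. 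Plugging in $s^ht^r$ with $s=aq$, $t=a^2q$, consistency requires $q/s+s+2(d-1)s/t+(d-1)st\le 1$, i.e.\ $\tfrac1a+aq+\tfrac{2(d-1)}a+(d-1)a^3q^2\le 1$. As $q\uparrow a^{-2}$ the first three terms alone already sum to at least $1$, and the inward term $(d-1)a^3q^2$ is strictly positive, so the inequality fails on a whole interval of $q$ below $(2d)^{-2}$ (for $d=2$ it fails for $q\gtrsim 0.039$, whereas the lemma claims all $q<1/16$); even with the most favourable exact counts of outward steps the sum tends to exactly $1$ at the endpoint, leaving no margin. In addition, the recursion couples $f(h,r)$ to levels of larger height, so there is no well-founded induction on $(h,r)$: one must truncate by path length and sum over lengths, which introduces a further factor $1/(1-\kappa)$ with $\kappa$ the consistency sum and spoils the stated constant $1/[(1-aq)(1-a^2q)]$. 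A minor further slip: to justify the term $q\,\tau_p(u-e_d)$ you say the rest of the path lies strictly below $u$; it need not (admissible paths may climb above $u$ and return), but since the sites of a $\di$-path are distinct the sub-path avoids $u$, which is the correct reason for independence from the state of $u$.

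The paper's proof sidesteps all of this by never setting up a site-level recursion: it bounds $\tau_p(u)\le\E_p N(u)$, the expected number of admissible $\di$-paths from $0$ to $u$, and classifies a path only by its numbers $U$ of upward and $D$ of downward steps. There are at most $a^{U+D}$ such paths, each admissible with probability $q^U$, and the endpoint satisfies $h(u)=U-D$ and $r(u)\le D$; summing $a^{U+D}q^U$ over $U-D\ge h$, $D\ge r$, and using $r\ge h^-$ only to write $(h+D)^+=h+D$, yields exactly the stated bound with the stated constant. The decisive difference is that bounding $r(u)$ by the total number of downward steps makes inward and outward horizontal moves indistinguishable, which is precisely the term that breaks your ansatz. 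If you wish to salvage your recursion, replace the exact radius $r(u)$ by a coordinate that every downward step increases by one (in effect re-deriving the paper's count); as written, your plan establishes the inequality only for a strictly smaller range of $q$ and with a worse constant, so it does not prove the lemma as stated.
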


\begin{proof}
Fix $r\geq 0$, and let $h \in \Z$ satisfy $r \ge h^-$.
Let
$$
T=T_{r,h}=\{u\in \Z^d: h(u) \ge h,\ r(u)\ge r\}.
$$
Let $N(u)$ be the number of admissible $\di$-paths
(of all finite lengths) from $0$ to $u$, and note that
$$
\sum_{u\in T} \tau_p(u) = \sum_{u\in T} \P_p(N(u)>0) \leq
\sum_{u\in T} \E_p N(u).
$$

Let $\pi$ be a $\di$-path beginning at $0$.  Let $U$ and $D$ be the
respective numbers of steps in $\pi$ that lie in each of the
sets
$$
\{e_d\}; \quad \{-e_d\}\cup\{- e_d\pm
    e_j:j=1,\ldots,d-1\}.
$$
(The letters $U$, $D$ stand for `upwards' and `downwards'.)
Thus, the length of $\pi$ is $U+D$, final endpoint $u$ of $\pi$
satisfies $h(u)=U-D$ and $r(u) \le D$, and $\pi$ is admissible
with probability $q^U$, where $q:=1-p$. Also, the number of
$\di$-paths $\pi$ beginning at $0$ with given values of $U$ and
$D$ is at most $a^{U+D}$, where $a:=2d$.

Therefore,
$$
\sum_{u\in T} \E_p N(u)\leq
\sum_{\substack{U,D\ge 0: \\ U-D\geq h,\ D \ge r}}a^{U+D} q^{U}.
$$
Assume that $a^2q<1$ (i.e., $p>1-(2d)^{-2}$). Summing over
$U$, the last expression equals
\[
\frac{1}{1-aq} \sum_{D\ge r} a^D (aq)^{(h+D)^+}.
\]
Since $D \ge r \ge h^-$, we have $(h+D)^+ = h+D$, and the last sum
equals
\[
\sum_{D \ge r} (aq)^h (a^2q)^D = \frac{(aq)^h(a^2q)^r}{1-a^2q}.
\qedhere
\]
\end{proof}

\begin{remark}
The number of $\di$-paths of $k$ steps is no greater than
$(2d)^k$. Only minor changes are required to the proofs if one
restricts the class of $\di$-paths to those satisfying
\eqref{lambdap} for which $x_i - x_{i-1} \ne -e_d$ for all $i$.
The number of such paths is no greater than $(2d-1)^k$, and this
leads to improved versions of Theorems \ref{main} and
\ref{thm:hill} with $2d$ replaced by $2d-1$. The details are
omitted.
\end{remark}

\section{Proofs of Theorems \ref{main} and \ref{thm:hill}}\label{sec:pf}

We give two proofs of Theorem \ref{main}: one directly from Lemma
\ref{sum2}, and the other via Theorem \ref{thm:hill}. The second
proof gives a worse exponent in the inequality of Theorem
\ref{main}(iv).  We sketch a third approach in the next
section.

\begin{proof}[1st proof of Theorem \ref{main}]
Take $p$ and $a=2d$ as in Lemma \ref{sum2}.  Let
$T_-:=\Z^{d-1}\times\{\ldots,-1,0\}$ and define the random set of
sites
$$
G:=\{v\in\Z^d: u\fr v \text{ for some }u\in T_-\}.
$$
Since an admissible path may always be extended by a downwards
step (provided the new site is not already in the path), if $v\in
G$ then $v-e_d\in G$. Using Lemma \ref{sum2} with $r=0$, we have
for $h>0$ and suitable $A<\oo$,
\begin{equation}\label{height}
\P_p(h e_d\in G)\leq \sum_{u\in T_-} \P_p(u\fr h e_d) =\sum_{u \in
T_{0,h}}\tau_p(u)\leq A(aq)^h.
\end{equation}
Hence, by the Borel--Cantelli lemma, a.s.\ for every
$x\in\Z^{d-1}$, only finitely many of the sites $(x,h)=x+h e_d$
for $h>0$ lie in $G$.

For $x\in\Z^{d-1}$, let
$$
\lip(x):=\min \{t>0: (x,t)\notin G\}.
$$
The required properties (i) and (iii) of the theorem follow by
fact that $\P_p$ is a product measure, and (iv) is an immediate
consequence of \eqref{height}.  To check (ii), consider any
$x,y\in \Z^{d-1}$ with $\|x-y\|=1$.  Since $(x,\lip(x)-1)\in
G$, and an admissible path may be extended in the diagonal
direction $(y-x)-e_d$, we have $(y,\lip(x)-2)\in G$, whence
$\lip(y)> \lip(x)-2$.
\end{proof}

\begin{proof}[Proof of Theorem \ref{thm:hill}]
We begin with an explicit construction of the minimal local cover
$L_x$ of $x\in\Z^{d-1}$, whenever $x$ possesses a local cover. Let
$H_x$ be the set of endpoints of admissible paths from $(x,0)$
that use no site of $\Z^{d-1}\times \{-1,-2,\dots\}$. By
the definition of admissibility, $H_x$ does not depend on the
states of sites with height less than or equal to 0.

Let $a^2q<1$. By Lemma \ref{sum2}, $\rad(H_0)=\sup\{\|u\|: u\in
H_0\}$ satisfies
\begin{align*}
\P_p(\rad(H_0) \ge k) &\le \sum_{\substack{u\in \Z^d:\\ h(u)\ge 0,\
r(u)\ge k}} \tau_p(u)\\
&\le A(a^2q)^k,\qq k\ge 0,
\end{align*}
for some $A=A(p,d)<\oo$.

On the event that $|H_0|<\oo$, the minimal local cover of $0$
is given by $$L_x(y)=\min\{h\in\Z^+:(y,h)\notin H_0\};$$ (that
is, the corresponding surface consists of the sites immediately
above $H_0$).  The claim follows.
\end{proof}

\begin{proof}[2nd proof of Theorem \ref{main} with different exponent in part (iv)]
Let $p>1-(2d)^{-2}$, as in Theorem
\ref{thm:hill},  and let $H_x$ be as in the proof. Let
$$
F(x) := 1+\sup\{h: (x,h) \in H_y \text{ for some } y\in\Z^{d-1}\}.
$$
Given the general observations above, it suffices to prove that
$F$ satisfies part (iv) of Theorem \ref{main}. Now, for $k\ge 1$,
\begin{align*}
\P_p(F(0) > k) &=\P_p((0,k)\in H_y \text{ for some }y)\\
&\le \P_p(\rho_y\ge k+\|y\|\text{ for some }y)\\
&\le \sum_{y\in\Z^{d-1}} \P_p(\rho_0\ge k+\|y\|),
\end{align*}
and this decays to $0$ exponentially in $k$, by Theorem
\ref{thm:hill}.
\end{proof}

\section{Sketch proof using branching random walk}\label{sec:sbrw}

This section contains a summary of an alternative approach to
the problem, using a branching random walk to bound the size of
a minimal local cover. Write $\De=\Z^{d-1}\times\Z^+$, and
recall the \df{height} $h(x)$ of site $x$. The minimal cover
$L$ at the origin $0$ is in one--one correspondence with the
set $S:=\{(x,L(x)): L(x)>0\}$ of open sites.
The set $S$ may be constructed iteratively as
follows. Let $C$ be the height of the lowest open site above
$0$, that is, $C:= \inf\{n \ge 1: ne_d \text{ is open}\}$.
Clearly, $S$ contains no site of the form $(0,k)$, $1\le k <
C$, and in addition no site in the pyramid
$$
P:= \{x\in \Z^d : \|x\| < C\}.
$$
Let $x\in\De$ be such that $\|x\|=C$. If all such $x$ are open,
then $S=\{x \in \De: \|x\|=C)\}$.
Any such $x$ that is closed is regarded as a \df{child} of
the origin. Each such child $x$ is labelled with the height of
the lowest open site above it, that is, with the \df{label}
$h(x)+\inf\{n\ge 1: x+ne_d\text{ is open}\}$. The process is
iterated for each such child, and so on to later generations.
If the ensuing procedure terminates after a finite number of
steps, then we have constructed the set $S$ corresponding
to the minimal local cover of $0$.

A full analysis of the above procedure would require specifying
the order in which children are considered, as well as
understanding the interactions between different pyramids.
Rather than do this, we will treat the families of different
children as independent, thereby over-counting the total size
and extent of the process. That is, we construct a dominating
branching random walk, as follows.

Let $\xi=(\xi(z): z\in \Z)$ be a random measure on $\Z$ with
$\xi(z)\in \{0,1,2,\dots\}$ a.s. The corresponding branching
random walk begins with a single particle located at $0$, that is,
$\xi_0:=\delta_0$, the point mass. This particle produces
offspring $\xi_1:=\xi$. For $n \ge 2$, $\xi_n$ is obtained from
$\xi_{n-1}$ as follows: each particle of $\xi_{n-1}$ has
(independent) offspring with the same law as $\xi$, shifted
according to the position of the parent. Assume there exists
$\mu>0$ such that
\begin{equation}\label{alpha}
\alpha:=\E\biggl(
\sum_{z \in \Z} \ee^{\mu z} \xi(z) \biggr)<1,
\end{equation}
and define
$$
S_n:= \sum_{z \in \Z} \ee^{\mu z} \xi_n(z).
$$
It is standard that $S_n/\alpha^n$ is a (non-negative) martingale.
In particular, $S_n/\alpha^n$ converges a.s., whence $S_n \to 0$
a.s.\ as $n\to\oo$.

We next describe the law of $\xi$ arising in the current setting.
Let $Q$ be the set of all closed $x \in \Z^d$ satisfying $x\ne 0$
and
$$
\sum_{i=1}^{d-1}|x_i|=-x_d,
$$
and think of $Q$ as the set of children of the initial particle
at $0$. Each child is allocated a \df{location} in $\Z$ equal
to the height of the lowest open site above it. More precisely,
the location of the child corresponding to $x \in Q$ is defined
as $h(x)+\inf\{n\ge 1: x+ne_d \text{ is open}\}$, and
$\xi_n(z)$ is simply the number of children with location $z$.
The corresponding BRW is written \brwx.

The number of children with height $-n$ is binomially distributed
with parameters $(\tau_n,1-p)$, where $\tau_n \le 2(2n+1)^{d-1}$,
and the height of each tower has a geometric distribution.
Following an elementary calculation, there exist $\mu>0$ and
$p_1=p_1(d)\in(0,1)$ such that: for $p\in (p_1,1)$, we have
$\alpha<1$ in \eqref{alpha}.

We now compare \brwx\ and the local cover of $0$. With $C$ as
above, consider \brwx\ with all locations shifted by height $C$,
written \sbrwx. Each child of the origin in the percolation model
is a child in \sbrwx, and its label in the former equals its
location in the latter. The first generation of \sbrwx\ may also
contain children with negative heights. In subsequent generations,
the models are different, but it may be seen that \sbrwx\
dominates the percolation model in the sense that the set of
locations in \sbrwx\ with positive heights dominates
(stochastically) the set of labels in the percolation model.

With $\xi'_n$ the $n$th generation in \sbrwx, let
$$
N:= \sup\{n: \xi'_n(z)>0 \text{ for some } z>0\}.
$$
By the above domination, if $\P(N<\oo)=1$, then the local cover of
$0$ is (a.s.) finite. By Markov's inequality,
\begin{align*}
\P(\xi'_n(z)>0\text{ for some } z>0) &=\P(\xi_n(z)>0 \text{ for
some } z> -C)\\
& \le \E(S_n)\E(e^{\mu C})\\
&=\alpha^n \E(e^{\mu C}).
\end{align*}
By the Borel--Cantelli lemma, $\P(N<\oo)=1$, and the finiteness of
the local cover at $0$ follows.

Substantially more may be obtained by a more careful analysis of
the maximum displacement of particles in the $k$th generation of
\sbrwx. In particular, for $p>p_1$, one may deduce that
$\P_p(\rho_0 \ge n)$ decays to $0$ faster than a quantity that is
exponential in some power of $n$, and this implies the existence
of the Lipschitz function of Theorem \ref{main}, as in the
second proof of Section \ref{sec:pf}. The details of these arguments are
omitted.

\section*{Acknowledgements}
We thank Ron Peled for helping to bring the authors together, and
for indicating a minor error in a draft of this paper. Steffen Dereich
kindly suggested using the Laplace transform $\alpha$ in
the investigation of the branching random walk. G.\ Grimmett
acknowledges support from Microsoft Research during his stay as a
Visiting Researcher in the Theory Group in Redmond. P. Dondl and
M. Scheutzow acknowledge support from the DFG-funded research
group \lq Analysis and Stochastics in Complex Physical Systems'.

\bibliography{lip-perc} \bibliographystyle{amsplain}

\end{document}